\providecommand{\U}[1]{\protect\rule{.1in}{.1in}}
\newtheorem{thm}{Theorem}
\newtheorem{lem}[thm]{Lemma}
\newtheorem{prop}[thm]{Proposition}
\begin{document}
\title[Nonlinear order isomorphisms on Banach lattices]{Persistence of Banach lattices under nonlinear order isomorphisms}

\begin{abstract}
Ordered vector spaces $E$ and $F$ are said to be order isomorphic if there is
a (not necessarily linear) bijection $T: E\to F$ such that $x\geq y$ if and
only if $Tx \geq Ty$ for all $x,y \in E$. We investigate some situations under
which an order isomorphism between two Banach lattices implies the persistence
of some linear lattice structure. For instance, it is shown that if a Banach
lattice $E$ is order isomorphic to $C(K)$ for some compact Hausdorff space
$K$, then $E$ is (linearly) isomorphic to $C(K)$ as a Banach lattice. Similar
results hold for Banach lattices order isomorphic to $c_{0}$, and for Banach
lattices that contain a closed sublattice order isomorphic to $c_{0}$.

\end{abstract}
\author{Denny H.\ Leung}
\address{Department of Mathematics, National University of Singapore, Singapore 119076}
\email{matlhh@nus.edu.sg}
\author{Wee-Kee Tang}
\address{Division of Mathematical Sciences, Nanyang Technological University, Singapore
637371 }
\email{WeeKeeTang@ntu.edu.sg }
\thanks{Research of the first author is partially supported by AcRF project
no.\ R-146-000-157-112. Research of the second author is partially supported
by AcRF project no.\ RG26/14.}
\subjclass[2010]{ 46B42}
\keywords{Nonlinear order isomorphism, lattice isomorphism, Banach lattice, AM-space}
\maketitle

%06F20, 46E05, 47H07, 54C30, 54F05}

Two ordered vector spaces $E$ and $F$ are said to be \emph{order isomorphic}
if there is a (not necessarily linear) bijection $T:E\to F$ so that $x\geq y$
if and only if $Tx\geq Ty$ for all $x,y\in E$. In this case, we call $T$ an
\emph{order isomorphism}. When $E$ and $F$ are Banach lattices, there is the
well studied notion of (vector) lattice isomorphism: $E$ and $F$ are
\emph{lattice isomorphic} if there is a linear bijection $T:E\to F$ such that
$T|x| = |Tx|$ for all $x\in E$. This is equivalent to the existence of a
linear order isomorphism from $E$ onto $F$. It is well known that a lattice
isomorphism $T$ between Banach lattices must also be an isomorphism between
the underlying Banach spaces; that is, both $T$ and $T^{-1}$ must be bounded.
It is easy to see that, in general, two Banach lattices that are order
isomorphic need not be lattice isomorphic. Indeed, for any measure space
$(\Omega,\Sigma,\mu)$ and any $1< p <\infty$, the map $f\mapsto|f|^{p}%
\operatorname{sgn} f$ is an order isomorphism from $L^{p}(\Omega,\Sigma,\mu)$
onto $L^{1}(\Omega,\Sigma,\mu)$. However, $L^{p}(\Omega,\Sigma,\mu)$ and
$L^{1}(\Omega,\Sigma,\mu)$ are not lattice isomorphic unless they are finite
dimensional. In contrast to the situation for $L^{p}$ spaces, it is shown in
this paper that some vector lattice properties pertaining to $AM$-(or abstract
$M$-) spaces persist under order isomorphisms. For the definition of
$AM$-spaces, as well as for general background with regard to the theory of
Banach lattices, we refer the reader to \cite{MN, S}. By the well known
Kakutani's representation theorem, a Banach lattice is an $AM$-space if and
only if it is isometrically lattice isomorphic to a closed sublattice of
$C(K)$ for some compact Hausdorff space $K$; see, e.g., \cite[Theorem
1.b.6]{LT}. Our first result is quite simple. If $u$ is a positive element in
a Banach lattice $E$, let $E_{u}$ be the closed ideal in $E$ generated by
$u$,
\[
E_{u} = \{x\in E: |x| \leq nu \text{ for some $n\in{\mathbb{N}}$}\}.
\]
$u$ is an \emph{order unit} of $E$ if $E_{u}= E$. It is a standard fact that
if $E$ has an order unit, then $E$ is lattice isomorphic to $C(K)$ for some
compact Hausdorff space $K$; see \cite[Proposition II.7.2 and Corollary 1 to
Theorem II.7.4]{S}.

\begin{thm}
\label{t0} Let $E$ be a Banach lattice. If $E$ is order isomorphic to $C(K)$
for some compact Hausdorff space $K$, then $E$ is lattice isomorphic to $C(K)$.
\end{thm}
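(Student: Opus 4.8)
The plan is to reduce the theorem to the standard fact about order units quoted just before the statement. Concretely, if $T\colon E\to C(K)$ is an order isomorphism, I want to manufacture an order unit in $E$, whence $E$ is lattice isomorphic to some $C(K')$; then a separate argument identifies $K'$ with $K$ (up to homeomorphism). The natural candidate for the order unit is $u=T^{-1}(\mathbf 1)$, where $\mathbf 1$ is the constant function $1$, a canonical order unit of $C(K)$.

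First I would record the purely order-theoretic features of $C(K)$ that $T$ transports to $E$. An order isomorphism preserves all lattice-theoretic notions that are definable from the order alone: suprema and infima of (finite) sets, hence the property of being a lattice; the relation "$x$ and $y$ are disjoint" is \emph{not} directly order-definable, but "$z$ is the supremum of $x$ and $y$" is, so $T$ carries finite suprema to finite suprema after translating. The subtlety is that $T$ need not preserve the zero element or be additive, so "order unit" — defined via $|x|\le nu$, which secretly uses the vector structure — is not obviously preserved. To get around this I would first normalize: replace $T$ by $x\mapsto T(x + T^{-1}0) - 0 = \ldots$; more cleanly, compose with a translation in $C(K)$ so that we may assume $T0 = 0$. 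Even so, $T$ is only order-preserving, not linear, so I cannot conclude $|x|\le nu \Rightarrow |Tx| \le nT u$ directly. The right move is to use \emph{Dedekind-type} characterizations: $u$ is an order unit of $E$ iff for every $x\in E$ there is $n$ with $-nu \le x \le nu$; equivalently, the order-bounded sets "exhaust" $E$. One should check that $T$ (after the normalization $T0=0$) maps order-bounded sets to order-bounded sets and conversely — this follows because "$[a,b]$ is an order interval" and "$S\subseteq [a,b]$" are order statements, and $C(K)$ has the property that $C(K) = \bigcup_n [-n\mathbf 1, n\mathbf 1]$. Pulling this back, $E = \bigcup_n [T^{-1}(-n\mathbf 1), T^{-1}(n\mathbf 1)]$, and one then wants a single $u$ with $[T^{-1}(-n\mathbf 1), T^{-1}(n\mathbf 1)] \subseteq [-m_n u, m_n u]$; monotonicity of $n\mapsto T^{-1}(n\mathbf 1)$ and $n \mapsto T^{-1}(-n\mathbf 1)$ lets me take, say, $u = \tfrac12\bigl(T^{-1}(\mathbf 1) - T^{-1}(-\mathbf 1)\bigr)$ (or just $T^{-1}(\mathbf 1) \vee (-T^{-1}(-\mathbf 1))$), and argue that $E_u = E$.

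Having produced an order unit, the quoted result gives a lattice isomorphism $E \cong C(K')$ with $K'$ compact Hausdorff. To pin down $K'$, I would observe that the composed map $C(K') \cong E \xrightarrow{T} C(K)$ is again an order isomorphism between spaces of continuous functions, and invoke (a nonlinear version of) the Banach–Stone philosophy: an order isomorphism between $C(K')$ and $C(K)$ forces $K' \cong K$. This can be seen via the lattice structure — e.g., the atoms of the order, or better the "order components"/maximal order ideals, recover the points of $K$; more robustly, the order determines the Dedekind completion and the band structure, and for $C(K)$ spaces the lattice isomorphism type determines $K$ among, say, the class $\{C(K)\}$ when $K$ is compact Hausdorff is \emph{not} true in general (lattice isomorphism is weaker than what Banach–Stone needs), so instead I would use the order-unit normalization: $u = T^{-1}(\mathbf 1)$ is not just an order unit but one for which the induced $AM$-norm $\|x\|_u = \inf\{\lambda : |x| \le \lambda u\}$ makes $E$ isometrically $C(K_u)$ with $K_u$ the "spectrum" (structure space of closed ideals), and $T$ maps this structure isomorphically onto that of $(C(K), \mathbf 1)$, i.e., onto $K$. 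Hence $K_u \cong K$ and $E$ is lattice isomorphic to $C(K)$.

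The main obstacle I anticipate is exactly the place where the vector structure sneaks into the definition of "order unit" and "order interval": $T$ is only order-preserving, so symmetric intervals $[-a,a]$ around the origin are not obviously sent to symmetric intervals, and one must not assume $T(-x) = -Tx$. The fix is to work only with the genuinely order-theoretic content — "$S$ is order bounded", "$E$ is the increasing union of a sequence of order intervals" — and to carefully verify that these transfer across $T$ and $T^{-1}$ after the single normalization $T0 = 0$, which is itself harmless since precomposing/postcomposing with a translation in $C(K)$ (a non-order operation, but a bijection we're free to choose) can be arranged; the cleanest route may instead be to avoid translating and simply note that the \emph{pair} of monotone sequences $\bigl(T^{-1}(n\mathbf 1)\bigr)_n$ (increasing) and $\bigl(T^{-1}(-n\mathbf 1)\bigr)_n$ (decreasing) sandwiches all of $E$, then build $u$ from these two sequences. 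The identification $K' \cong K$ should then be comparatively soft.
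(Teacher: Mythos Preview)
Your plan has the right overall shape --- produce an order unit in $E$, deduce $E\cong C(K')$, then identify $K'$ with $K$ --- but the crucial middle step is not justified. You correctly observe that $E=\bigcup_n[T^{-1}(-n\mathbf1),\,T^{-1}(n\mathbf1)]$ and propose $u=T^{-1}(\mathbf1)\vee(-T^{-1}(-\mathbf1))$; the assertion ``argue that $E_u=E$'' is precisely the heart of the matter, and monotonicity of the sequences $T^{-1}(\pm n\mathbf1)$ does \emph{not} give it. To get $E_u=E$ you would need $T^{-1}(n\mathbf1)\le m_n u$ for suitable integers $m_n$, and nothing purely order-theoretic forces this: $T$ is not linear, so there is no relation whatsoever between $T^{-1}(n\mathbf1)$ and scalar multiples of $T^{-1}(\mathbf1)$. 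You flag exactly this worry in your final paragraph but never resolve it; ``build $u$ from these two sequences'' is a restatement of the problem, not a solution. The paper's fix uses the one ingredient you never invoke, completeness of the norm: the order intervals $[0,T^{-1}(n\mathbf1)]$ are closed, their union is $E_+$, and the Baire Category Theorem forces one of them to have nonempty interior; an interior point of $E_+$ is then an order unit by a standard Banach-lattice fact. Without Baire (or some equivalent use of completeness) your argument does not close.

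A smaller issue: your discussion of why $K'\cong K$ is muddled. You write that for $C(K)$ spaces ``the lattice isomorphism type determines $K$\dots is \emph{not} true in general,'' but that is false --- a \emph{linear} lattice isomorphism $C(K)\to C(K')$ already forces $K\cong K'$ (such maps are weighted composition operators). What is genuinely needed here is the \emph{nonlinear} statement that an order isomorphism $C(K)\to C(K')$ yields $K\cong K'$; the paper simply cites \cite{CS} for this. Your sketch of recovering points from the order structure points in the right direction, but it is itself a nontrivial theorem that requires a proof or a reference, not a one-line remark.
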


\begin{proof}
Let $T:C(K)\to E$ be an order isomorphism. We may assume that $T0 = 0$. For
any $n\in{\mathbb{N}}$, we use the same symbol $n$ denote the constant
function on $K$ with value $n$. Then $C(K)_{+} = \cup_{n}[0,n]$. Hence $E_{+}
= \cup_{n}[0,x_{n}]$, where $x_{n} = Tn$. By the Baire Category Theorem, there
exists $n_{0}$ such that $[0,x_{n_{0}}]$ contains nonempty interior. Thus
$E_{+}$ has an interior point $u$. By \cite[Corollary 1.2.14]{MN}, $u$ is an
order unit of $E$. It follows that $E$ is lattice isomorphic to $C(L)$ for
some compact Hausdorff space $L$. In this case, $C(K)$ and $C(L)$ are
nonlinearly order isomorphic. By \cite[Proposition 3]{CS}, $K$ and $L$ are
homeomorphic. Thus $C(K)$ and $C(L)$ are lattice isomorphic. Since $E$ is
lattice isomorphic to $C(L)$, the proof is complete.
\end{proof}

We do not know if a Banach lattice that is order isomorphic to an $AM$-space
must be lattice isomorphic to an $AM$-space. In this direction, there is a
useful characterization of $AM$-spaces due to Cartwright and Lotz; see
\cite{CL} and \cite[Theorem 2.1.12]{MN}. A subset $A$ in an ordered vector
space $E$ is \emph{order bounded} if there are $u, v\in E$ such that $u \leq
x\leq v$ for all $x\in A$. A sequence $(x_{n})$ in a vector lattice is
\emph{disjoint} if $|x_{m}| \wedge|x_{n}| =0$ whenever $m \neq n$.

\begin{thm}
\emph{(Cartwright and Lotz)} \label{t1} A Banach lattice $E$ is lattice
isomorphic to an $AM$-space if and only if every disjoint norm null sequence
in $E$ is order bounded in $E^{\prime\prime}$.
\end{thm}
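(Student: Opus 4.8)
The plan is to establish the two implications separately, the forward one being routine. \emph{Suppose $E$ is lattice isomorphic to an $AM$-space}, and fix a lattice isomorphism $S\colon X\to E$ with $X$ an $AM$-space. Since the dual of an $AM$-space is an $AL$-space and the dual of an $AL$-space is an $AM$-space with order unit (see \cite{S}), $X''$ is an $AM$-space with order unit $e$, and in the equivalent $M$-norm of $X''$ one has $|y|\le\|y\|\,e$ for all $y\in X''$. The bidual $S''\colon X''\to E''$ is again a lattice isomorphism and extends $S$. Hence, if $(x_n)$ is \emph{any} norm bounded sequence in $E$, say $\|x_n\|\le M$, then $\|S^{-1}x_n\|\le M\|S^{-1}\|$, so $|S^{-1}x_n|\le M\|S^{-1}\|\,e$ in $X''$, and applying $S''$ gives $-w\le x_n\le w$ in $E''$ with $w:=S''(M\|S^{-1}\|\,e)$. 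In particular every disjoint norm null sequence in $E$ is order bounded in $E''$.

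\emph{For the converse}, assume every disjoint norm null sequence in $E$ is order bounded in $E''$. The first step is a reformulation: a disjoint sequence $(x_n)$ in $E$ is order bounded in $E''$ if and only if $\sup_n\bigl\|\sum_{i=1}^n|x_i|\bigr\|<\infty$. For the ``only if'' direction, if $|x_n|\le w\in E''$ for all $n$, then, the $|x_i|$ being pairwise disjoint, $\sum_{i=1}^n|x_i|=\bigvee_{i=1}^n|x_i|\le w$ in $E''$, and since $E$ embeds isometrically and as a sublattice in $E''$ we get $\bigl\|\sum_{i=1}^n|x_i|\bigr\|\le\|w\|$. For the ``if'' direction, the increasing sequence $y_n:=\sum_{i=1}^n|x_i|$ is norm bounded, so it has a weak$^*$ cluster point $w$ in $E''$; since $\langle y_n,\phi\rangle$ increases to its supremum for each $\phi\in(E')_+$ and $\langle w,\phi\rangle$ is a cluster value of it, we get $\langle w,\phi\rangle\ge\langle y_n,\phi\rangle$ for all $n$, i.e.\ $y_n\le w$ in $E''$, whence $|x_n|\le w$ for all $n$. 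Thus the hypothesis says precisely that every disjoint norm null sequence in $E$ has norm bounded partial sums of absolute values.

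The second (and decisive) step is to upgrade this to a uniform estimate: there is a constant $C\ge1$ with
\[
\Bigl\|\bigvee_{i=1}^n x_i\Bigr\|\le C\max_{1\le i\le n}\|x_i\|\qquad\text{for all }n\in\N\text{ and all }x_1,\dots,x_n\in E_+.
\]
The gliding hump behind this is simple: if $(u_i)$ is a disjoint sequence of positive norm one vectors, then $\sigma_n:=\bigl\|\sum_{i=1}^n u_i\bigr\|$ increases, and if $\sigma_n\to\infty$ then $v_i:=\sigma_i^{-1/2}u_i$ is disjoint with $\|v_i\|=\sigma_i^{-1/2}\to0$ while $\bigl\|\sum_{i=1}^n v_i\bigr\|\ge\sigma_n^{-1/2}\sigma_n\to\infty$, contradicting the first step. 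So no single disjoint normalized positive sequence has divergent partial sums; the remaining work is to deduce the displayed uniform bound from this, which requires both a reduction of the general estimate to disjoint families and the amalgamation of finitely many finitely-supported disjoint systems into a single one. This amalgamation in an abstract Banach lattice — where disjoint systems cannot be freely combined — is the delicate classical point of Cartwright and Lotz, and it is here that the main obstacle lies; everything before and after it is routine.

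Once the displayed uniform estimate is available, define, for $x\in E$,
\[
|||x|||:=\inf\Bigl\{\max_{1\le i\le n}\|g_i\|\;:\;n\in\N,\ g_1,\dots,g_n\in E,\ |x|\le\bigvee_{i=1}^n|g_i|\Bigr\}.
\]
Taking $n=1$, $g_1=x$ gives $|||x|||\le\|x\|$, and any admissible family gives $\|x\|\le\bigl\|\bigvee_i|g_i|\bigr\|\le C\max_i\|g_i\|$, so $\|x\|/C\le|||x|||\le\|x\|$; thus $|||\cdot|||$ is an equivalent norm. It is clearly a lattice norm, and it is subadditive because $|x+y|\le\bigvee_i|g_i|+\bigvee_j|h_j|=\bigvee_{i,j}\bigl(|g_i|+|h_j|\bigr)$ (suprema distribute over sums in a vector lattice) and each $|g_i|+|h_j|$ is its own modulus with $\bigl\||g_i|+|h_j|\bigr\|\le\|g_i\|+\|h_j\|$. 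Finally, for $u,v\in E_+$, admissible families for $u$ and for $v$ yield $u\vee v\le\bigvee\bigl(\{|g_i|\}\cup\{|h_j|\}\bigr)$, a single join of the combined family, so $|||u\vee v|||\le\max\bigl(\max_i\|g_i\|,\max_j\|h_j\|\bigr)$; taking infima, $|||u\vee v|||\le\max(|||u|||,|||v|||)$, while the reverse inequality is immediate from monotonicity. Hence $|||\cdot|||$ is an $M$-norm and $E$, renormed by it, is an $AM$-space, so $E$ is lattice isomorphic to an $AM$-space.
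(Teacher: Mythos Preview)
The paper does not prove this theorem. It is quoted with attribution to Cartwright and Lotz \cite{CL} (with a pointer to \cite[Theorem 2.1.12]{MN}) and then invoked as a black box in the proof of Proposition~\ref{p3}; no argument is supplied. So there is nothing in the paper against which to compare your attempt.

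As for the attempt itself: the forward implication and the final renorming construction are fine, and your reformulation of ``order bounded in $E''$'' via norm-bounded partial sums is correct. But you have not actually proved the converse. Your gliding-hump argument establishes only that for each \emph{fixed} disjoint normalized positive sequence $(u_i)$ the partial sums $\bigl\|\sum_{i\le n}u_i\bigr\|$ remain bounded, with a bound that a priori depends on the sequence. What you need for the renorming to yield an equivalent norm is the \emph{uniform} estimate $\bigl\|\bigvee_{i=1}^n x_i\bigr\|\le C\max_i\|x_i\|$ with a single constant $C$ valid for all finite positive families simultaneously. You yourself label the passage from the sequence-wise bound to the uniform one as ``the delicate classical point of Cartwright and Lotz'' and ``the main obstacle,'' and then do not carry it out. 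Distinct finite disjoint families in an abstract Banach lattice cannot in general be concatenated into a single disjoint sequence, so there is no cheap route here; that amalgamation step is precisely the substance of the theorem. Your write-up is therefore an honest outline that correctly locates the difficulty, but it leaves the decisive step unproved and does not establish the result.
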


With the help of this theorem, we offer a partial solution to the problem
raised above. A subspace $F$ of a Banach lattice $E$ is an \emph{(order)
ideal} if $y\in F$ for all $y\in E$ such that $|y| \leq|x|$ for some $x\in F$.
By \cite[Proposition 2.1.9]{MN}, every closed ideal in $C(K)$ has the form
\[
I = \{f\in C(K): f =0 \text{ on $K_{0}$}\} \text{ for some closed subset
$K_{0}$ of $K$}.
\]

\begin{prop}
\label{p3} Let $E$ be a Banach lattice. If $E$ is order isomorphic to a closed
ideal of some space $C(K)$, where $C(K)$ is separable, then $E$ is lattice
isomorphic to an $AM$-space.
\end{prop}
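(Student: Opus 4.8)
The plan is to verify the Cartwright--Lotz criterion of Theorem~\ref{t1}: one must show that every disjoint, norm null sequence in $E$ is order bounded in $E''$. First I would normalise the hypothesis. A closed ideal of $C(K)$ has the form $\{f:f=0\text{ on }K_{0}\}$, which is (isometrically) lattice isomorphic to $C_{0}(K\setminus K_{0})$, and $C(K)$ separable forces $K$ metrisable; so we may assume that $E$ is order isomorphic to $C_{0}(V)$ for some locally compact, metrisable (hence $\sigma$-compact) space $V$, say via $T:E\to C_{0}(V)$. Composing $T$ with a translation in $C_{0}(V)$ we may assume $T0=0$; then $T$ maps $E_{+}$ bijectively onto $C_{0}(V)_{+}$, preserves all suprema and infima that exist, and in particular carries disjoint positive sequences to disjoint positive sequences. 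I would also record the following lemma about the target space: \emph{a disjoint positive sequence $(g_{n})$ in $C_{0}(V)$ with $\|g_{n}\|\to0$ has a supremum $\bigvee_{n}g_{n}$ that already lies in $C_{0}(V)$.} Indeed, disjointness makes the pointwise supremum equal to the pointwise sum, and its continuity and its vanishing at infinity both follow from local compactness together with the fact that, for each $\varepsilon>0$, only finitely many $g_{n}$ have norm $>\varepsilon$.

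Now let $(e_{n})$ be disjoint and norm null in $E$. Splitting into positive and negative parts --- which are again disjoint and norm null, and whose order bounds in $E''$ combine to one for $\{e_{n}\}$ --- it suffices to treat a positive disjoint norm null sequence $(e_{n})$. Put $f_{n}=Te_{n}$, a positive disjoint sequence in $C_{0}(V)$. If $\|f_{n}\|_{C_{0}(V)}\to0$, the lemma gives $h:=\bigvee_{n}f_{n}\in C_{0}(V)$, whence $e_{n}=T^{-1}f_{n}\le T^{-1}h$ for all $n$ and $\{e_{n}\}$ is order bounded even in $E$; Theorem~\ref{t1} then applies. Thus everything reduces to proving that $\|Te_{n}\|_{C_{0}(V)}\to0$ whenever $(e_{n})$ is disjoint and norm null in $E$ --- equivalently, since $\bigl\|\bigvee_{n\le N}f_{n}\bigr\|=\max_{n\le N}\|f_{n}\|$ in the $AM$-space $C_{0}(V)$, to bounding the norms of the partial joins $\bigvee_{n\le N}e_{n}=T^{-1}\bigl(\bigvee_{n\le N}f_{n}\bigr)$ in $E$. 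This is the step I expect to be the real obstacle, and the only place where separability (i.e.\ metrisability of $V$) is genuinely needed; note that the Baire category argument used in Theorem~\ref{t0} is unavailable here, because for a quasi-interior point $g$ of $C_{0}(V)$ the union $\bigcup_{m}[0,T(mg)]$ is only dense in $E_{+}$, not all of it.

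For that last step the soft part runs as follows. If $\|f_{n}\|\not\to0$, pass to a subsequence on which $\sum_{n}\|e_{n}\|<\infty$ and relabel; then $u_{m}:=\sum_{k\ge m}e_{k}$ is a decreasing positive sequence in $E$ with $u_{m}\downarrow0$ in order and in norm, so $e_{n}\le u_{m}$ for $n\ge m$ gives $0\le f_{n}\le b_{m}:=Tu_{m}$ for $n\ge m$, while $b_{m}\downarrow0$ in the lattice $C_{0}(V)$ because $T$ preserves infima. Choosing $t_{n}$ with $f_{n}(t_{n})\ge\delta$ (where $\|f_{n}\|\ge\delta$), all the $t_{n}$ lie in the compact metrisable set $\{b_{1}\ge\delta\}$; passing to a convergent subsequence $t_{n_{j}}\to t^{*}$ and using the continuity of each $b_{m}$ forces $b_{m}(t^{*})\ge\delta$ for every $m$, even though $\bigwedge_{m}b_{m}=0$. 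The crux is to turn this into a contradiction: one must exploit a metric on $V$, the disjointness and shrinking supports of the $f_{n}$, and the fact that $T$ is a bijection, in order to upgrade "$\bigwedge_{m}b_{m}=0$ in the lattice" into genuine control near $t^{*}$ --- in effect, an automatic-continuity statement for $T$ at $0$ (an order isomorphism onto a separable Banach lattice cannot map a norm null sequence onto one carrying a persistent spike). I expect most of the real work to lie here; once it is in hand, the proof closes via the lemma and Theorem~\ref{t1} exactly as above.
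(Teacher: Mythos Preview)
Your overall framework is right: invoke the Cartwright--Lotz criterion, normalise to an order isomorphism $T:E\to I\subseteq C(K)$ with $T0=0$, $K$ metrisable and $I=\{f\in C(K):f=0\text{ on }K_0\}$, and show that for a positive disjoint norm null sequence $(e_n)$ in $E$ the images $f_n=Te_n$ admit a common majorant in $I$. The difficulty is that you reduce this to the stronger assertion $\|f_n\|\to0$ and then leave that assertion open. The obstruction you yourself isolate is genuinely fatal for the sketch as written: in $C_0(V)$ the statement $\bigwedge_m b_m=0$ is a \emph{lattice} infimum, not a pointwise one, and is fully compatible with $b_m(t^*)\ge\delta$ for every $m$ (e.g.\ tent functions of shrinking support centred at $t^*$). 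Nothing you have recorded about the disjointness of the $f_n$, the shrinking supports, or the bijectivity of $T$ excludes this configuration, so the argument does not close; and the paper neither proves nor uses $\|f_n\|\to0$.

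The paper sidesteps the issue by aiming for something weaker but sufficient. Rather than force $\|f_n\|\to0$, it proves two facts about the family $(f_n)$: (i)~$\sup_n\|f_n\|<\infty$, and (ii)~$c_k:=\sup\{\,f_n(t):d(t,K_0)\le 1/k,\ n\in\N\,\}\to0$, i.e.\ the $f_n$ are \emph{uniformly} small near the boundary $K_0$. Both come from exactly the summability trick you already deploy: if either fails, select a subsequence with $\sum\|e_{n_i}\|<\infty$, set $x=\sum e_{n_i}\in E$, and note $Tx\ge f_{n_i}$ for all $i$. For (i) this forces $\|Tx\|>k$ for all $k$; for (ii) one gets points $t_i$ with $Tx(t_i)>\varepsilon$ and $d(t_i,K_0)\to0$, contradicting $Tx\in I$. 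With (i) and (ii) in hand, an explicit majorant is written down: choose a continuous $g$ on $[0,\infty)$ with $g(0)=0$ and $g(s)\ge c_k$ for $s\in[1/(k+1),1/k)$, and put $f(t)=g(d(t,K_0))$. Then $f\in I$, $f\ge f_n$ for all $n$, and $T^{-1}f$ order-bounds the original sequence in $E$. Metrisability of $K$ is used exactly here, to manufacture $f$ from the distance to $K_0$, rather than in any automatic-continuity statement for $T$ at $0$. In short, the missing idea is to replace the global target ``$\|f_n\|\to0$'' by the two-part target ``bounded, and uniformly small near $K_0$'', which is both provable by your own technique and exactly what is needed to build a majorant in $I$.
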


\begin{proof}
Let $T:E\to I$ be an order isomorphism, where $I$ is a closed ideal in $C(K)$,
with $C(K)$ separable. We may assume that $T0 = 0$. Since $C(K)$ is separable,
$K$ is metrizable. Let $d$ be a metric on $K$ generating the given topology.
There is a closed set $K_{0}$ in $K$ so that $I$ consists of all functions in
$C(K$) that vanish on $K_{0}$. By Theorem \ref{t1}, it suffices to show that
every disjoint norm null sequence in $E$ is order bounded in $E$. Let
$(x_{n})$ be a disjoint null sequence in $E$. Define $f_{n} = T|x_{n}|$ for
all $n$. Then $(f_{n})$ is a disjoint nonnegative sequence in $I$. If
$(f_{n})$ is not norm bounded, there is a subsequence $(f_{n_{k}})$ such that
$\|x_{n_{k}}\| \leq1/2^{k}$ and $\|f_{n_{k}}\| > k$ for all $k$. The sum $x =
\sum|x_{n_{k}}|$ converges in $E$. Clearly $Tx \geq f_{n_{k}} \geq0$ for all
$k$. This implies that $\|Tx\| > k$ for all $k$, which is absurd. Therefore,
there exists $c_{0}$ such that $c_{0} > \|f_{n}\|$ for all $n$.

\medskip

\noindent\underline{Claim}. Let $c_{k} = \sup\{f_{n}(t): d(t,K_{0}) \leq1/k,
n\in{\mathbb{N}}\}$. Then $(c_{k})$ is a nonincreasing null sequence.

\medskip

Clearly $(c_{k})$ is a nonincreasing sequence. If $(c_{k})$ is not a null
sequence, there exists $\varepsilon>0$ such that $c_{k} > \varepsilon$ for all
$k$. By uniform continuity of $f_{n}$, for each $n$, $\lim_{k}\sup\{f_{n}(t):
d(t,K_{0})\leq1/k\} =0$. Thus, there exist $n_{1}< n_{2}< \cdots$ and
$(t_{i})$ in $K$, $d(t_{i},K_{0})\to0$, such that $f_{n_{i}}(t_{i}) >
\varepsilon$ for all $i$. By taking a further subsequence if necessary, we may
also assume that $\|x_{n_{i}}\|\leq1/2^{i}$ for all $i$. Now $x =
\sum|x_{n_{i}}|$ converges in $E$ and $Tx \geq f_{n_{i}}$ for all $i$. Then
$Tx(t_{i}) \geq f_{n_{i}}(t_{i}) > \varepsilon$ for all $i$. Since
$d(t_{i},K_{0})\to0$, this contradicts the fact that $Tx\in I$.

\medskip

By the Claim, there exists a continuous function $g$ on $[0,\infty)$ such that
$g(0) = 0$, $g(s) \geq c_{k}$ if $\frac{1}{k+1} \leq s < \frac{1}{k}$, where
we take $1/0 = \infty$. Define $f:K \to{\mathbb{R}}$ by $f(t) = g(d(t,K_{0}%
))$. Then $f\in C(K)$ and $f=0$ on $K_{0}$. Hence $f\in I$. For any $n$, if
$d(t,K_{0}) = 0$, then $t\in K_{0}$ and hence $f_{n}(t) =0 \leq f(t)$. On the
other hand, if $\frac{1}{k+1} \leq d(t,K_{0}) < \frac{1}{k}$, then $f_{n}(t)
\leq c_{k} \leq f(t)$. Thus $f \geq f_{n}$ for all $n\in{\mathbb{N}}$. Then
$T^{-1}f \geq|x_{n}|$ for all $n\in{\mathbb{N}}$. Therefore, $(x_{n})$ is
order bounded in $E$, as desired.
\end{proof}

Now we can show that the Banach lattice $c_{0}$ is stable under nonlinear
order isomorphisms.

\begin{thm}
\label{t4} Let $E$ be a Banach lattice. The following are equivalent.

\begin{enumerate}
\item $E$ is lattice isomorphic to $c_{0}$.

\item $E$ is order isomorphic to $c_{0}$.

\item $E$ is order isomorphic to an infinite dimensional closed sublattice of
$c_{0}$.
\end{enumerate}
\end{thm}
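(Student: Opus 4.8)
The plan is to prove the cycle of implications $(1)\Rightarrow(2)\Rightarrow(3)\Rightarrow(1)$. The implications $(1)\Rightarrow(2)$ and $(2)\Rightarrow(3)$ are essentially trivial: a lattice isomorphism is in particular an order isomorphism, and $c_0$ is an infinite dimensional closed sublattice of itself. So the entire content lies in $(3)\Rightarrow(1)$.

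To prove $(3)\Rightarrow(1)$, suppose $T\colon E\to S$ is an order isomorphism onto an infinite dimensional closed sublattice $S$ of $c_0$, normalized so that $T0=0$. First I would observe that every infinite dimensional closed sublattice of $c_0$ is itself lattice isomorphic (indeed isometrically lattice isomorphic) to $c_0$: a closed sublattice of $c_0$ is a closed ideal of some $C(K)$ — here $C(K)$ is separable, so Proposition~\ref{p3} applies and shows $E$ is lattice isomorphic to an $AM$-space. That gives us a foothold but not yet the identification with $c_0$; I still need to pin down that the $AM$-space is exactly $c_0$. For that, I would work with the structure of $S$ directly. A closed sublattice $S\subseteq c_0$, being an $AM$-space with the property that its closed unit ball has no "fat" part, should have the following form: there is a countable set and a partition-like structure so that $S$ is lattice isometric to $c_0(\Gamma)$ for some countable infinite $\Gamma$, hence to $c_0$. (Concretely, closed sublattices of $c_0$ are, up to lattice isomorphism, of the form $c_0$ or finite dimensional $\ell^\infty(n)$; since $S$ is infinite dimensional it must be $c_0$.) Then it suffices to show that a Banach lattice $E$ order isomorphic to $c_0$ itself is lattice isomorphic to $c_0$, i.e. to reduce to the case $S=c_0$.

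So assume $T\colon E\to c_0$ is an order isomorphism with $T0=0$. The key structural fact about $c_0$ to exploit is that its order intervals $[0,y]$ for $y\in (c_0)_+$ are norm compact, equivalently $c_0$ has order continuous norm but no order unit, and that $(c_0)_+=\bigcup_n [0, y_n]$ for a suitable increasing sequence $y_n$ (e.g. $y_n$ the indicator of the first $n$ coordinates). Applying $T$, we get $E_+ = \bigcup_n [0, T y_n]$. Set $u_n = T y_n$; these form an increasing sequence in $E_+$ with $\bigcup_n [0,u_n]=E_+$, so $E = \bigcup_n E_{u_n}$ where $E_{u_n}$ is the closed ideal generated by $u_n$. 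By the Baire category theorem, some $E_{u_{n_0}}$ is non-meager, hence (being a closed subspace, indeed a closed ideal) equal to $E$ — wait, I must be careful: the $E_{u_n}$ need not be closed a priori unless we argue, but each $[0,u_n]$ is a closed bounded set whose countable union is $E_+$, so Baire gives an interior point of $E_+$, and then by \cite[Corollary 1.2.14]{MN} (as in Theorem~\ref{t0}) $E$ would have an order unit and be $C(K)$-like — but that contradicts $E$ being order isomorphic to $c_0$, since $c_0$ has no interior point in its positive cone. This tension is exactly what I need to resolve: the right statement is that $E$ has order continuous norm. I would prove order continuity by a disjointness argument: if $(z_k)$ is a disjoint order bounded sequence in $E_+$, say $z_k\le w$, look at $(T z_k)$; these need not be disjoint in $c_0$, but $T$ preserves the relations $z_k\le w$ and $z_k\ge 0$, and since $\inf$ of $z_k$ over a subsequence relates to images, and each $[0,Tw]$ in $c_0$ is norm compact, I can extract norm convergence of a subsequence of $(Tz_k)$, pull back, and conclude $\|z_k\|\to 0$ along that subsequence; disjointness then forces $\|z_k\|\to 0$. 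Combined with Proposition~\ref{p3} (which already gives $E$ lattice isomorphic to an $AM$-space), order continuity of the norm forces $E$ to be lattice isomorphic to $c_0(\Gamma)$ for some index set $\Gamma$, and separability (inherited from $c_0$ via the order isomorphism — order isomorphisms need not preserve separability, but $E = \bigcup_n E_{u_n}$ with each $E_{u_n}$ an $AM$-space with unit, hence $C(K_n)$; these are separable because they embed order-isomorphically into $[0,y_n]\subseteq c_0$ which is separable and compact, and by Theorem~\ref{t0}-type reasoning the $C(K_n)$ are metrizable) forces $\Gamma$ countable, and infinite dimensionality forces $\Gamma$ infinite, so $E$ is lattice isomorphic to $c_0$.

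The main obstacle I anticipate is the order continuity step — getting from "order isomorphic to $c_0$" to "norm is order continuous" — because order isomorphisms interact badly with norm and with lattice operations (they need not preserve disjointness, suprema, or boundedness of sequences in an obvious way). The trick will be to transfer order-boundedness faithfully (which $T$ does preserve) and then exploit that order bounded sets in $c_0$ are relatively norm compact, combined with a gliding-hump/subsequence argument to kill the norms of a disjoint bounded sequence; the identity $T(\sum |x_{n_k}|)\ge T|x_{n_k}|$ used repeatedly in the proof of Proposition~\ref{p3} is the prototype of the estimates I would need. Once order continuity and the $AM$-space structure are in hand, the representation $E\cong_{\text{lat}} c_0(\Gamma)$ is classical, and the cardinality bookkeeping to get $\Gamma=\N$ is routine.
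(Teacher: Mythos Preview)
Your proposal has a real gap at the order-continuity step, which you correctly flag as the crux but do not resolve. The suggested mechanism---use norm compactness of $[0,Tw]$ in $c_0$ to extract a norm-convergent subsequence of $(Tz_k)$ and then ``pull back'' to get $\|z_k\|\to 0$---fails because $T^{-1}$ is not continuous: norm convergence of $(Tz_k)$ in $c_0$ gives no control whatsoever over $\|z_k\|$ in $E$. (Incidentally, an order isomorphism fixing $0$ \emph{does} preserve disjointness of positive elements, since it preserves infima, so $(Tz_k)$ is in fact disjoint; but this does not rescue the argument.) Your separability argument at the end also rests on the decomposition $E=\bigcup_n E_{u_n}$, which came from the identity $(c_0)_+=\bigcup_n[0,y_n]$ that you already recognized as false, so it too needs to be redone from scratch. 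The overall shape ``Proposition~\ref{p3} $+$ order continuity $\Rightarrow$ $c_0(\Gamma)$, then count $\Gamma$'' is not unreasonable in principle, but as written both of the central steps are missing.

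The paper sidesteps all of this with a direct construction. Taking $T:c_0\to E$, it sets $x_n=Te_n$, shows $(x_n)$ is a disjoint positive sequence with each $[0,x_n]$ totally ordered (hence $[0,x_n]=\{cx_n:0\le c\le 1\}$), and proves that $T(ae_n)$ is always a nonnegative multiple of $x_n$. It follows that every $x\in E_+$ has a unique representation $x=\bigvee_n b_nx_n$. The decisive analytic step---$\lim_n\|b_nx_n\|=0$---uses no continuity of $T$ or $T^{-1}$, but a scaling argument: if $\|b_nx_n\|\ge\varepsilon$ on an infinite set $I$, one shows $T^{-1}(kb_nx_n)=a_{k,n}e_n$ for each $k$, uses $T^{-1}(kx)=\bigvee_n a_{k,n}e_n\in c_0$ to pick $n_k\in I$ with $a_{k,n_k}\to 0$, and then $z=\bigvee_k a_{k,n_k}e_{n_k}\in c_0$ satisfies $Tz\ge kb_{n_k}x_{n_k}$ for every $k$, forcing $\|Tz\|=\infty$. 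With this in hand and Proposition~\ref{p3} (used only to guarantee the AM-structure, so that disjoint null series converge), the explicit linear map $S(b_n)=\sum b_nx_n/\|x_n\|$ is the required lattice isomorphism---no abstract order-continuity argument and no cardinality bookkeeping needed.
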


\begin{proof}
The implications (a) $\implies$ (b) $\implies$ (c) are immediate. By
\cite[Corollary 5.3]{RT}, every infinite dimensional closed sublattice of
$c_{0}$ is lattice isomorphic to $c_{0}$. The implication (c) $\implies$ (b)
follows. Now assume that $E$ is order isomorphic to $c_{0}$. Let $T: c_{0}\to
E$ be an order isomorphism such that $T0 = 0$. Denote by $(e_{n})$ the unit
vector basis of $c_{0}$ and let $x_{n} = Te_{n}$ for each $n$. If $m\neq n$,
$0 = T(e_{m}\wedge e_{n}) = x_{m}\wedge x_{n}$. That is, $(x_{n})$ is a
disjoint positive sequence in $E$. Also, since $[0,e_{n}]$ is a totally
ordered set, so is $[0,x_{n}]$. It follows that $[0,x_{n}] = \{cx_{n}: 0\leq
c\leq1\}$.

\medskip

\noindent\underline{Claim}. For each $n\in{\mathbb{N}}$ and any $a\geq0$,
there exists $b\geq0$ such that $T(ae_{n}) = bx_{n}$.

\medskip

Let $a \geq0$ be given and define $b= \sup\{c\geq0: cx_{n} \leq T(ae_{n})\}$.
Obviously, the set on the right contains $0$ and hence is nonempty. Also
$cx_{n} \leq T(ae_{n})$ implies that $|c|\|x_{n}\| \leq\|T(ae_{n})\|$. Since
$x_{n}\neq0$, it follows that $b <\infty$. There exist $c_{k} \geq0$ such that
$c_{k}x_{n} \leq T(ae_{n})$ and $c_{k} \to b$. Since $E_{+}$ is a closed set,
$bx_{n} \leq T(ae_{n})$. Let $x = T(ae_{n}) -bx_{n}$. Then $x\geq0$. Thus
$T^{-1}x = \sum a_{m}e_{m} = \bigvee a_{m}e_{m}\geq0$ in $c_{0}$. If $m\neq
n$,
\[
0 = T(ae_{n}\wedge e_{m}) = T(ae_{n}) \wedge x_{m} \geq x\wedge x_{m}\geq0.
\]
Thus $x\wedge x_{m} = 0$ if $m\neq n$. On the other hand, since $x\wedge
x_{n}\in[0,x_{n}]$, there exists $0 \leq c\leq1$ such that $x\wedge x_{n} =
cx_{n}$. Then
\[
T(ae_{n})-bx_{n} = x\geq x\wedge x_{n} = cx_{n}%
\]
and hence $T(ae_{n}) \geq(b+c)x_{n}$. By definition of $b$, $c = 0$. Hence $x
\wedge x_{n} = 0$. Therefore, $T^{-1}x \wedge e_{i} = T^{-1}(x \wedge x_{i})
=0$ for all $i$. Clearly, this means that $T^{-1}x=0$ and hence $x = 0$. So we
have shown that $T(ae_{n}) = bx_{n}$, as desired. This completes the proof of
the Claim.

\medskip

Let $x$ be any positive element in $E$. Then $T^{-1}x = \bigvee a_{n}e_{n}$
for some nonnegative sequence $(a_{n}) \in c_{0}$. Thus $x = \bigvee
T(a_{n}e_{n})$. By the Claim, $x = \bigvee b_{n}x_{n}$ for some nonnegative
scalars $b_{n}$. If $\bigvee b_{n}x_{n} = \bigvee b_{n}^{\prime}x_{n}$, where
$b_{n},b_{n}^{\prime}\geq0$ and both suprema exist, then using the
distributivity of the lattice operations, it is easy to see that $b_{n} =
b_{n}^{\prime}$ for all $n$.

Now we show that for any $x = \bigvee b_{n}x_{n}$ as described above,
$\lim\|b_{n}x_{n}\| = 0$. Otherwise, there exist $\varepsilon>0$ and an
infinite subset $I$ of ${\mathbb{N}}$ so that $\|b_{n}x_{n}\| \geq\varepsilon$
for all $n\in I$. For each $k\in{\mathbb{N}}$, $T^{-1}(kb_{n}x_{n}) =
\bigvee_{m}a_{k,m}e_{m}$. If $i\neq n$,
\[
0 = T^{-1}(kb_{n}x_{n}\wedge x_{i})= T^{-1}(kb_{n}x_{n}) \wedge T^{-1}x_{i} =
(\bigvee_{m}a_{k,m}e_{m}) \wedge e_{i} = (a_{k,i}\wedge1)e_{i}.
\]
Thus $a_{k,i} = 0$ if $i\neq n$. Hence $T^{-1}(kb_{n}x_{n}) = a_{k,n}e_{n}$.
Then $T^{-1}(kx) = \bigvee a_{k,n}e_{n}$. In particular, $\lim_{n}a_{k,n}= 0$
for all $k$. Choose $n_{1}< n_{2}<\cdots$ in $I$ so that $\lim_{k}a_{k,n_{k}}
= 0$. We have $z = \bigvee a_{k,n_{k}}e_{n_{k}} \in c_{0}$ and $z\geq
T^{-1}(kb_{n_{k}}x_{n_{k}})$ for all $k$. Hence $Tz \geq kb_{n_{k}}x_{n_{k}}$
for all $k$. But then $\|Tz\| \geq\|kb_{n_{k}}x_{n_{k}}\| \to\infty$, which is
impossible. This proves that $\lim\|b_{n}x_{n}\| = 0$.

To recap, we have shown that if $x\in E_{+}$, then $x$ has a unique
representation $x = \bigvee b_{n}x_{n}$, where $b_{n}$ are nonnegative scalars
so that $\lim\|b_{n}x_{n}\| = 0$. Note that $c_{0}$ is a closed ideal in the
space $C({\mathbb{N}}^{*})$, where ${\mathbb{N}}^{*}$ is the $1$-point
compactification of ${\mathbb{N}}$, and that $C({\mathbb{N}}^{*})$ is
separable. By Proposition \ref{p3}, $E$ is lattice isomorphic to an
$AM$-space. Consider the linear map $S:c_{0}\to E$ given by $S(b_{n}) = \sum
b_{n}x_{n}/\|x_{n}\|$. Note that if $(b_{n}) \in c_{0}$, then $\sum b_{n}%
x_{n}/\|x_{n}\|$ converges in $E$ since $E$ is an $AM$-space. Since $(x_{n})$
is a disjoint sequence, $S$ is an injection. If $x\in E_{+}$, then $x =
\bigvee b_{n}x_{n}$, where $b_{n}$ are nonnegative scalars so that
$\lim\|b_{n}x_{n}\| = 0$. Thus $S(b_{n}\|x_{n}\|) = \sum b_{n}x_{n} = \bigvee
b_{n}x_{n} = x$. Hence the range of $S$ contains $E_{+}$. It follows that $S$
is onto. It is clear that $S(b_{n}) \geq0$ if $(b_{n})\geq0$. Since $S$ is a
bijection as well, $S$ is an order isomorphism. Hence it is a linear order
isomorphism and thus a lattice isomorphism.
\end{proof}

In view of Theorems \ref{t0} and \ref{t4}, and the example of $L^{p}$ spaces
mentioned in the introduction, it seems reasonable to ask the following question.

\bigskip

\noindent\textbf{Problem}. Suppose that $E$ is a Banach lattice so that any
Banach lattice that is order isomorphic to $E$ is (linearly) lattice
isomorphic to $E$. Must $E$ be an $AM$-space?

\bigskip

We can offer a partial solution to the problem. An element $e \geq0$ in a
Banach lattice is an \emph{atom} if the ideal generated by $e$ is one
dimensional. A Banach lattice is \emph{atomic} if there is a maximal
orthogonal set consisting of atoms. Let $E$ be an atomic Banach lattice and
let $(e_{\gamma})_{\gamma\in\Gamma}$ be a maximal orthogonal set consisting of
normalized atoms. Any element $x\in E$ has a unique representation
\begin{equation}
\label{rep}x= \bigvee_{\gamma\in\Gamma_{1}}a_{\gamma}e_{\gamma}-
\bigvee_{\gamma\in\Gamma_{2}}a_{\gamma}e_{\gamma},
\end{equation}
where $\Gamma_{1}$ and $\Gamma_{2}$ are disjoint subsets of $\Gamma$ and $0 <
a_{\gamma}\in{\mathbb{R}}$ for all $\gamma\in\Gamma_{1}\cup\Gamma_{2}$. See,
e.g., \cite[Exercise II.7]{S}. For $1 < p <\infty$, the \emph{$p$%
-convexification} of $E$, denoted by $E^{(p)}$, as defined on p.53 in
\cite{LT}, may be presented as follows. $E^{(p)}$ is the set of all real
sequences $(a_{\gamma})_{\gamma\in\Gamma}$ such that $\bigvee|a_{\gamma}%
|^{p}e_{\gamma}\in E$, endowed with the norm $|||(a_{\gamma})||| =
\|\bigvee|a_{\gamma}|^{p}e_{\gamma}\|^{1/p}$. $E^{(p)}$ is a Banach lattice
(in the pointwise order). For each $\gamma\in\Gamma$, let $u_{\gamma}=
(a_{\xi})_{\xi\in\Gamma}$ with $a_{\xi}= 1$ if $\xi= \gamma$ and $a_{\xi}= 0$
otherwise. Then $(u_{\gamma})_{\gamma\in\Gamma}$ is a maximal orthogonal set
in $E^{(p)}$ consisting of normalized atoms. The map $T:E^{(p)}\to E$,
\[
T(a_{\gamma}) = \bigvee_{\{\gamma:a_{\gamma}\geq0\}} | a_{\gamma}|^{p}e_{n} -
\bigvee_{\{\gamma:a_{\gamma}< 0\}} |a_{\gamma}|^{p}e_{n}%
\]
is a nonlinear order isomorphism. The norm on a Banach lattice $X$ is said to
be \emph{weakly Fatou} \cite[Definition 2.4.18]{MN} if there is a constant $K
< \infty$ so that if $0 \leq x_{\tau}\uparrow x$, then $\|x\| \leq K\sup
_{\tau}\|x_{\tau}\|$.

\begin{thm}
Let $E$ be an atomic Banach space and let $(e_{\gamma})_{\gamma\in\Gamma}$ be
a maximal orthogonal set consisting of normalized atoms. Suppose that any
Banach lattice $F$ that is (nonlinearly) order isomorphic to $E$ is (linearly)
lattice isomorphic to $E$. Then the closed sublattice generated by
$(e_{\gamma})_{\gamma\in\Gamma}$ in $E$ is lattice isomorphic to $c_{0}%
(\Gamma)$. Furthermore, if the norm on $E$ is weakly Fatou, then $E$ is
lattice isomorphic to a closed sublattice of $\ell^{\infty}(\Gamma)$.
\end{thm}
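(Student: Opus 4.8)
The plan is to exploit the $p$-convexifications $E^{(p)}$, $1<p<\infty$, introduced just before the statement. Since each $E^{(p)}$ is order isomorphic to $E$, the hypothesis supplies, for a fixed $p$, a linear lattice isomorphism $R\colon E^{(p)}\to E$, which is automatically an isomorphism of the underlying Banach spaces; put $M=\max(\|R\|,\|R^{-1}\|)$. A lattice isomorphism carries atoms to atoms, and since $(e_\gamma)$ is a maximal orthogonal set of atoms in $E$ every atom of $E$ is a positive scalar multiple of some $e_\gamma$; hence $Ru_\gamma=\lambda_\gamma e_{\sigma(\gamma)}$ for positive scalars $\lambda_\gamma$ and a map $\sigma\colon\Gamma\to\Gamma$. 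Because $R$ preserves disjointness and sends maximal orthogonal sets to maximal orthogonal sets, $\sigma$ is a bijection; and from $\|u_\gamma\|=\|e_{\sigma(\gamma)}\|=1$ one reads off $\lambda_\gamma=\|Ru_\gamma\|\le\|R\|$ and $\lambda_\gamma^{-1}=\|R^{-1}e_{\sigma(\gamma)}\|\le\|R^{-1}\|$, so $1/M\le\lambda_\gamma\le M$.

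The next step is to control the fundamental function $\varphi(F)=\|\bigvee_{\gamma\in F}e_\gamma\|_E$ over finite $F\subseteq\Gamma$, together with $\psi(n)=\sup\{\varphi(F):|F|=n\}$, which is finite ($\varphi(F)\le|F|$) and non-decreasing in $n$. By the definition of the convexification norm, $\|\bigvee_{\gamma\in F}u_\gamma\|_{E^{(p)}}=\varphi(F)^{1/p}$, while $R\bigl(\bigvee_{\gamma\in F}u_\gamma\bigr)=\bigvee_{\gamma\in F}\lambda_\gamma e_{\sigma(\gamma)}$ has $E$-norm between $M^{-1}\varphi(\sigma(F))$ and $M\varphi(\sigma(F))$. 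Combining this with $\|R\bigl(\bigvee_{\gamma\in F}u_\gamma\bigr)\|_E\le M\varphi(F)^{1/p}$ gives $\varphi(\sigma(F))\le M^{2}\varphi(F)^{1/p}$ for every finite $F$. Now take the supremum over all $F$ with $|F|=n$; here the surjectivity of $\sigma$ is essential, since it guarantees $\sigma(F)$ ranges over all $n$-element subsets, so the left side has supremum $\psi(n)$. This yields $\psi(n)\le M^{2}\psi(n)^{1/p}$, hence $\psi(n)\le M^{2p/(p-1)}$ for all $n$, and therefore $K_1:=\sup_F\varphi(F)<\infty$.

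Granting boundedness of $\varphi$, the first assertion is quick: the closed sublattice generated by $(e_\gamma)$ is the closure of the finitely supported combinations $\sum_\gamma b_\gamma e_\gamma$, and since $\sup_\gamma|b_\gamma|\le\|\sum_\gamma b_\gamma e_\gamma\|_E\le K_1\sup_\gamma|b_\gamma|$, the assignment $(b_\gamma)\mapsto\sum_\gamma b_\gamma e_\gamma$ is an isomorphic lattice embedding of $(c_{00}(\Gamma),\|\cdot\|_\infty)$ with dense range, so passing to completions identifies this sublattice with $c_0(\Gamma)$. For the second assertion, let $x\mapsto\widehat x=(a_\gamma(x))_\gamma$ denote the coefficient map coming from the band projections onto the projection bands generated by the atoms; it is an injective (by maximality of $(e_\gamma)$) lattice homomorphism into $\mathbb R^\Gamma$, and since band projections are contractive, $\|\widehat x\|_\infty\le\|x\|_E$, so in particular $\widehat x\in\ell^\infty(\Gamma)$. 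Conversely, for $x\ge 0$ one has $\bigvee_{\gamma\in F}a_\gamma(x)e_\gamma\uparrow x$ along finite $F$, with $\|\bigvee_{\gamma\in F}a_\gamma(x)e_\gamma\|_E\le\|\widehat x\|_\infty\,\varphi(F)\le K_1\|\widehat x\|_\infty$; the weakly Fatou property then gives $\|x\|_E\le KK_1\|\widehat x\|_\infty$, where $K$ is the weak Fatou constant, and the case of general $x$ follows by replacing $x$ with $|x|$. Thus $x\mapsto\widehat x$ is a lattice isomorphism of $E$ onto a closed sublattice of $\ell^\infty(\Gamma)$.

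The one genuinely delicate point I anticipate is extracting, from the mere existence of a lattice isomorphism $E^{(p)}\cong E$ with no a priori control on its norm, a bound on $\psi(n)$ uniform in $n$. The device is that for a single fixed $p$ the constant $M$ is simply some finite number, so the self-improving inequality $\psi(n)\le M^{2}\psi(n)^{1/p}$ forces $\psi(n)\le M^{2p/(p-1)}$ for all $n$ simultaneously. Everything else---identifying $Ru_\gamma$ with $\lambda_\gamma e_{\sigma(\gamma)}$ and showing $\sigma$ is bijective, the norm computations in the convexification, and the weakly Fatou estimate---is routine once this uniform bound is in hand.
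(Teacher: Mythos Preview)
Your proof is correct and follows essentially the same approach as the paper's: the paper uses the $2$-convexification (you allow general $p$), obtains a lattice isomorphism between $E$ and its convexification, reads off that the isomorphism permutes atoms up to bounded scalars, and derives the same self-improving inequality $\psi(n)\le C\,\psi(n)^{1/p}$ to force $\sup_n\psi(n)<\infty$; the deductions of the $c_0(\Gamma)$ and $\ell^\infty(\Gamma)$ statements then proceed identically. The only cosmetic differences are the direction of the isomorphism and that the paper picks a near-maximizer (incurring a harmless factor of $2$) where you take the supremum directly using bijectivity of $\sigma$.
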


\begin{proof}
Let $F$ be the $2$-convexification of $E$ and let $(u_{\gamma})$ be the
maximal orthogonal set of normalized atoms in $F$ as described above.. Since
$F$ is order isomorphic to $E$, it is lattice isomorphic to $E$ by the
assumption. Let $T:E\rightarrow F$ be a lattice isomorphism. For each $\gamma
$, $Te_{\gamma}$ is a nonzero positive element in $F$ and $[0,Te_{\gamma
}]=T[0,e_{\gamma}]$ lies within a $1$-dimensional subspace. Hence there exist
$\pi(\gamma)\in\Gamma$ and $c_{\gamma}>0$ such that $Te_{\gamma}=c_{\gamma
}u_{\pi(\gamma)}$. Since $T$ is a lattice isomorphism, $\pi:\Gamma
\rightarrow\Gamma$ is a permutation on $\Gamma$ and $0<\inf c_{\gamma}\leq\sup
c_{\gamma}<\infty$. For any finite subset $I$ of $\Gamma$, we have
\begin{align*}
\frac{1}{\sup c_{\gamma}}\Vert\bigvee_{\gamma\in I}e_{\gamma}\Vert &
\leq\Vert\bigvee_{\gamma\in I}\frac{1}{c_{\gamma}}e_{\gamma}\Vert=\Vert
T^{-1}\bigvee_{\gamma\in I}u_{\pi(\gamma)}\Vert\\
&  \leq\Vert T^{-1}\Vert\cdot|||\bigvee_{\gamma\in I}u_{\pi(\gamma)}|||=\Vert
T^{-1}\Vert\Vert\bigvee_{\gamma\in I}e_{\pi(\gamma)}\Vert^{1/2}.
\end{align*}
Let $C=\sup c_{\gamma}\Vert T^{-1}\Vert$. For any $m\in{\mathbb{N}}$, let
\[
\mu_{m}=\sup\{\Vert\bigvee_{n\in I}e_{n}\Vert:\#I=m\}.
\]
Clearly, $\mu_{m}<\infty$. Let $I$ be such that $\#I=m$ and $\Vert
\bigvee_{n\in I}e_{n}\Vert\geq\mu_{m}/2$. Then
\[
\mu_{m}\geq\Vert\bigvee_{\gamma\in\pi(I)}e_{\gamma}\Vert\geq\frac{1}{C^{2}%
}\Vert\bigvee_{\gamma\in I}e_{\gamma}\Vert^{2}\geq\frac{\mu_{m}^{2}}{4C^{2}}.
\]
Therefore, $\mu_{m}\leq4C^{2}$.

Let $G$ be the closed sublattice of $E$ generated by $(e_{\gamma})_{\gamma
\in\Gamma}$. Since $(e_{\gamma})_{\gamma\in\Gamma}$ is a disjoint set, $G$ is
the same as the closed subspace generated by $(e_{\gamma})_{\gamma\in\Gamma}$.
Clearly, $\sum a_{\gamma}e_{\gamma}\in G$ implies that $(a_{\gamma}) \in
c_{0}(\Gamma)$. Conversely, suppose that $(a_{\gamma})\in c_{0}(\Gamma)$. For
any $\varepsilon> 0$, there exists a finite subset $I$ of $\Gamma$ such that
$|a_{\gamma}| \leq\varepsilon$ for all $\gamma\notin I$. If $J$ is a finite
subset of $\Gamma$ disjoint from $I$, then
\[
\|\sum_{\gamma\in J}a_{\gamma}e_{\gamma}\| \leq\max_{\gamma\in J}|a_{\gamma
}|\|\sum_{\gamma\in J}e_{\gamma}\| \leq\varepsilon\cdot4C^{2}.
\]
Thus, $\sum a_{\gamma}e_{\gamma}$ converges in $G$ if $(a_{\gamma}) \in
c_{0}(\Gamma)$. It is now clear that the map $S: c_{0}(\Gamma) \to G$ defined
by $S(a_{\gamma}) = \sum a _{\gamma}e_{\gamma}$ is a lattice isomorphism.

Finally, suppose that the norm on $E$ is weakly Fatou with constant $K$. By
the discussion preceding the theorem, each $x\in E$ has a unique
representation (\ref{rep}). Clearly, for $\gamma\in\Gamma_{1}\cup\Gamma_{2}$,
$|a_{\gamma}| = \|a_{\gamma}e_{\gamma}\| \leq\|x\|$. Define
\[
x(\gamma) =
\begin{cases}
a_{\gamma} & \text{if $\gamma\in\Gamma_{1}$},\\
-a_{\gamma} & \text{if $\gamma\in\Gamma_{2}$},\\
0 & \text{otherwise}.
\end{cases}
\]
Then $(x(\gamma)) \in\ell^{\infty}(\Gamma)$ and $\|(x(\gamma))\|_{\infty}%
\leq\|x\|$. On the other hand $\bigvee_{\gamma\in I} |x(\gamma)|e_{\gamma
}\uparrow|x|$, where $I$ runs through the directed set of all finite subsets
of $\Gamma$. By assumption,
\begin{align*}
\|x\|  &  = \| |x| \| \leq K \sup_{I}\|\bigvee_{\gamma\in I} |x(\gamma
)|e_{\gamma}\|\\
&  = K \sup_{I}\|S(|x(\gamma)|)_{\gamma\in I}\| \leq K\|S\|\|(x(\gamma
))\|_{\infty}.
\end{align*}
It is now clear that the map $R: E \to\ell^{\infty}(\Gamma)$ given by $Rx =
(x(\gamma))$ is a lattice isomorphism from $E$ onto a closed sublattice of
$\ell^{\infty}(\Gamma)$.
\end{proof}

Our final result shows that containment of a closed sublattice isomorphic to
$c_{0}$ is also a stable property under order isomorphisms. This holds in fact
in the category of quasi-Banach lattices. Let $E$ be a real or complex vector
space. A \emph{quasi-norm} on $E$ is a functional $\|\cdot\|$ on $E$ such that

\begin{enumerate}
\item $\|x\| > 0$ if $x\neq0$,

\item $\|ax\| = |a| \|x\|$ for any scalar $a$ and any $x\in E$,

\item There is a constant $C <\infty$ such that $\|x+y\| \leq C(\|x\|
\vee\|y\|)$ for all $x, y \in E$.
\end{enumerate}

A quasi-norm on $E$ generates a Hausdorff linear topology where the sets $\{x:
\|x\| < 1/n\}$ form a neighborhood basis at $0$. If this topology is
completely metrizable, then we say that the quasi-norm is \emph{complete} and
that $E$ is a \emph{quasi-Banach space}. A \emph{quasi-Banach lattice} is a
real vector lattice equipped with a complete quasi-norm $\|\cdot\|$ such that
$|x| \leq|y|$ implies $\|x\| \leq\|y\|$ for all $x, y\in E$. Refer to
\cite{Ka} for more information regarding quasi-Banach spaces and quasi-Banach
lattices. Given a quasi-norm $\|\cdot\|$ with associated constant $C$, it is
evident that
\[
\|\sum^{n}_{k=1}x_{k}\| \leq\max_{1\leq k\leq n-1}C^{k}\|x_{k}\| \vee
C^{n-1}\|x_{n}\|.
\]
It follows that if $(x_{k})$ is a sequence in a quasi-Banach space with
$\lim\|x_{k}\| = 0$, then there is a subsequence $(x_{k_{i}})$ such that $\sum
x_{k_{i}}$ converges. It is easy to see that the positive cone $\{x: x\geq0\}$
is a closed set in a quasi-Banach lattice; equivalently, the limit of any
positive sequence is positive. Consider the following statements.

\begin{thm}
\label{t12} Let $E$ and $F$ be order isomorphic quasi-Banach lattices. If $E$
contains a closed sublattice (nonlinearly) order isomorphic to $c_{0}$, then
$F$ contains a closed sublattice linearly lattice and topologically isomorphic
to $c_{0}$.
\end{thm}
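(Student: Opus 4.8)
The plan is to transport a $c_0$-sublattice of $E$ to $F$ via the order isomorphism $T\colon E\to F$, using the same circle of ideas as in the proof of Theorem~\ref{t4}: an order isomorphism takes disjoint elements to disjoint elements, takes $[0,x]$ to $[0,Tx]$, and a totally ordered order interval remains totally ordered, which pins down the image of each ray $\{ae_n\}$ up to a scalar. First I would normalize $T0=0$. Let $(g_n)$ be a normalized disjoint sequence in $E$ spanning a closed sublattice lattice isomorphic to $c_0$; by the Kottman-type estimate implicit in the $c_0$-structure, the partial suprema $\bigvee_{n\in I}g_n$ are uniformly bounded in norm, say by $M$. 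Write $x_n=Tg_n$. As in Theorem~\ref{t4}, disjointness of $(g_n)$ gives disjointness of $(x_n)$ in $F$, and since $[0,g_n]$ is totally ordered (it sits inside a copy of $c_0$ where $[0,e_n]=\{ce_n:0\le c\le 1\}$), $[0,x_n]$ is totally ordered, hence $[0,x_n]=\{cx_n:0\le c\le 1\}$. Repeating verbatim the Claim in the proof of Theorem~\ref{t4}, for every $a\ge 0$ there is $b\ge 0$ with $T(ag_n)=bx_n$; the only input used there was the totally-ordered structure of the intervals and disjointness, both of which survive in the quasi-Banach setting since the positive cone is closed and finite sums of norm-null sequences have convergent subsequences.

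Next I would show $\lim\|x_n\|=0$ and, more generally, that $\sum_{n} b_n x_n$ is controlled by $\sup_n\|b_nx_n\|$. For the first, suppose $\|x_{n}\|\ge\varepsilon$ along an infinite set; pass to a subsequence with $\|g_{n_i}\|$ (equivalently $1$, after we pick them to make the relevant pullbacks small) arranged so that $z=\bigvee_i \lambda_i g_{n_i}$ converges in $E$ for a suitable null sequence $\lambda_i\to\infty$, using that in a copy of $c_0$ one can take suprema of bounded-support-style combinations; then $Tz\ge T(\lambda_i g_{n_i})=\mu_i x_{n_i}$ forces $\|Tz\|$ to blow up — the quasi-triangle inequality is harmless here since we only need a lower bound $\|Tz\|\ge\|\mu_i x_{n_i}\|$. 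This is the argument that produces the contradiction. Combining, $(x_n)$ is a disjoint null sequence in $F$ whose partial suprema are uniformly norm-bounded: indeed $\|\bigvee_{n\in I}x_n\|=\|T(\bigvee_{n\in I}g_n)\|$ and the right side is bounded because $\bigvee_{n\in I}g_n$ ranges over a norm-bounded set on which $T$ restricted to order-bounded pieces behaves — more carefully, bound it by noting $\bigvee_{n\in I}x_n\le T(\sum_{n}g_n)$ once $\sum g_n$ is arranged to converge, exactly as the "$Tx\ge f_{n_k}$" step in Proposition~\ref{p3}.

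With $(x_n)$ a normalized-up-to-scalars disjoint null sequence in $F$ having uniformly bounded partial suprema, define $S\colon c_0\to F$ by $S(b_n)=\sum b_n x_n/\|x_n\|$. Convergence of this series for $(b_n)\in c_0$ follows from the uniform bound on partial suprema together with the subsequence-summability property of null sequences in quasi-Banach spaces; disjointness makes $S$ a lattice homomorphism and injective; boundedness of $S$ and of $S^{-1}$ on the closed sublattice $\overline{S(c_0)}$ follows from the two-sided norm estimate $c\|(b_n)\|_\infty\le\|\sum b_n x_n/\|x_n\|\|\le C'\|(b_n)\|_\infty$, the upper bound from the partial-suprema bound and the lower bound from disjointness (for disjoint vectors $\|\bigvee|b_n|x_n/\|x_n\|\|\ge\sup|b_n|$). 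Hence $\overline{S(c_0)}$ is a closed sublattice of $F$ linearly lattice and topologically isomorphic to $c_0$.

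I expect the main obstacle to be the two boundedness facts in the quasi-Banach setting: first, extracting from the "$E$ contains a closed sublattice order isomorphic to $c_0$" hypothesis a disjoint sequence $(g_n)$ whose finite suprema are genuinely norm-bounded (one must go through the linear $c_0$-copy inside that sublattice — by Theorem~\ref{t4}'s quoted result \cite{RT}, or directly — rather than just an abstract order isomorphism), and second, ensuring that the series $\sum b_n x_n/\|x_n\|$ really converges in the quasi-Banach lattice $F$, where the absence of the triangle inequality means one leans on the lattice inequality $\|\bigvee_{k\le n}|b_k|x_k/\|x_k\|\|\le\sup_k|b_k|\cdot\sup_I\|\bigvee_{n\in I}x_n\|$ and Cauchy-ness via tails $\bigvee_{m\le k\le n}$. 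Once those are in place the rest is a direct transcription of the arguments in Theorem~\ref{t4} and Proposition~\ref{p3}.
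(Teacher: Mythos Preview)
Your proposal has a genuine gap at the heart of the argument: the two ``boundedness facts'' you flag at the end are not just obstacles, they actually fail as you have set them up.

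First, the claim $\lim\|x_n\|=0$ (where $x_n=Tg_n$) is neither true in general nor what is needed. The goal is a disjoint sequence $(y_i)$ in $F$ with $\inf\|y_i\|>0$ and bounded partial sums; pushing $\|x_n\|$ to zero works against you. Your contradiction argument for $\|x_n\|\to 0$ requires $z=\bigvee_i\lambda_i g_{n_i}$ to converge in $E$ with $\lambda_i\to\infty$, but the $g_{n_i}$ sit in a copy of $c_0$, so such a supremum exists only when $(\lambda_i)$ is bounded (indeed null). Second, your bound on partial suprema via ``$\bigvee_{n\in I}x_n\le T(\sum_n g_n)$'' fails because $\sum_n g_n$ does not converge: the $g_n$ are normalized. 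And the fallback ``$T$ restricted to order-bounded pieces behaves'' has no content, since a nonlinear order isomorphism need not be bounded on norm-bounded sets; this is precisely the difficulty.

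The paper circumvents both issues by a two-step strategy you are missing. Step one is a bootstrap reducing Theorem~\ref{t12} to the case where $E$ contains a \emph{linear} copy of $c_0$ (Theorem~\ref{t13}): apply the linear statement with $E=c_0$ and $F$ equal to the given sublattice. Step two, with a linear $S:c_0\to E$ in hand, uses two short lemmas to get $\inf\|Se_k\|>0$ and then an $N$ with $\limsup\|T(NSe_k)\|>0$, followed by a case split. Either $\inf_k\|T(\eta Se_k)\|=0$ for some $\eta>0$, in which case one sums a subsequence of $T(\eta Se_{k_i})$ in $F$ to a $y$, and $Tx$ with $x=(N/\eta)T^{-1}y$ dominates every $y_i=T(NSe_{k_i})$; or else $\inf_k\|T(\eta Se_k)\|>0$ for all $\eta$, and one shows $\sup_j\|T(\varepsilon\sum_{i\le j}Se_{k_i})\|<\infty$ for some $\varepsilon>0$ by a contradiction using the genuinely convergent element $u=\sum_m 2^{-m}\sum_{i\le j_m}e_{k_i}\in c_0$ and the \emph{linearity} of $S$ to produce an upper bound $Su$ in $E$. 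In both cases one obtains a disjoint $(y_i)$ in $F$ with $\inf\|y_i\|>0$ and $\sup_j\|\sum_{i\le j}y_i\|<\infty$, which is all that is needed. Your outline never produces an element of $E$ that dominates arbitrarily long partial sums of the $g_{n_i}$ (or scaled versions thereof), and without that the transfer to $F$ cannot get off the ground.
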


\begin{thm}
\label{t13} Let $E$ and $F$ be order isomorphic quasi-Banach lattices. If $E$
contains a closed sublattice \emph{linearly} order isomorphic to $c_{0}$, then
$F$ contains a closed sublattice linearly lattice and topologically isomorphic
to $c_{0}$.
\end{thm}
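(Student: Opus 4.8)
Since a linear order isomorphism is in particular an order isomorphism, Theorem~\ref{t13} is a special case of Theorem~\ref{t12} and would follow from it; I shall instead give a direct argument, which is in any case the heart of the proof of Theorem~\ref{t12}. Fix an order isomorphism $T\colon E\to F$ with $T0=0$, a closed sublattice $G\subseteq E$, and a linear order isomorphism $J\colon c_{0}\to G$. Such a $J$ is automatically a lattice isomorphism, and by the standard argument (were $J$ or $J^{-1}$ unbounded, rescale a disjoint positive sequence to be norm-null with norm-unbounded image, then pass to a summable subsequence via the displayed subsequence principle and use monotonicity of the quasi-norm) it is a topological isomorphism onto $G$. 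Put $y_{n}=Je_{n}$. Then $(y_{n})$ is a disjoint sequence of nonzero positive vectors with $\inf_{n}\|y_{n}\|>0$, and, crucially, $B:=\sup\{\|\bigvee_{n\in I}y_{n}\|:I\subseteq\mathbb{N}\text{ finite}\}<\infty$, this last bound being inherited through $J$ from $\|\bigvee_{n\in I}e_{n}\|_{c_{0}}=1$.

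Next I would transport this skeleton through $T$. An order isomorphism preserves all existing suprema and infima, so $z_{n}:=Ty_{n}$ is a disjoint sequence of nonzero positive vectors in $F$; and since $[0,y_{n}]=\{cy_{n}:0\le c\le 1\}$ is totally ordered, so is $[0,z_{n}]=T[0,y_{n}]$, which forces the ideal generated by $z_{n}$ to be one-dimensional, say spanned by a normalized atom $f_{n}$ (the $f_{n}$ are then pairwise disjoint). The same reasoning applied to the rays $\{ay_{n}:a\ge 0\}$ and $\{bf_{n}:b\ge 0\}$ shows that $T$ carries one onto the other order-isomorphically, so there is an increasing homeomorphism $\psi_{n}\colon[0,\infty)\to[0,\infty)$, $\psi_{n}(0)=0$, with $T(ay_{n})=\psi_{n}(a)f_{n}$. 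Finally, because $G$ is closed, $\sum_{n}a_{n}y_{n}=J((a_{n}))$ is the supremum $\bigvee_{n}a_{n}y_{n}$ in $E$ for every $(a_{n})\in c_{0}^{+}$, and hence $T(J((a_{n})))=\bigvee_{n}\psi_{n}(a_{n})f_{n}$; in particular this supremum exists in $F$.

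Now I would appeal to a routine fact: any disjoint positive sequence $(w_{n})$ in a quasi-Banach lattice with $\inf_{n}\|w_{n}\|>0$ and $\sup\{\|\bigvee_{n\in I}w_{n}\|:I\text{ finite}\}<\infty$ generates a closed sublattice linearly lattice and topologically isomorphic to $c_{0}$ — the map $(a_{n})\mapsto\sum a_{n}w_{n}$ is well defined on $c_{0}$ by completeness of the quasi-norm, is a lattice homomorphism by disjointness, and is bounded above and below by the two hypotheses. So it is enough to produce such a sequence inside $F$, the obvious candidate being $w_{n}=t_{n}f_{n}$ with scalars $t_{n}>0$, $\inf_{n}t_{n}>0$. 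For the crucial supremum bound, observe that $t_{n}f_{n}=T(\psi_{n}^{-1}(t_{n})y_{n})$, so
\[
\bigvee_{n\in I}t_{n}f_{n}=T\Bigl(\bigvee_{n\in I}\psi_{n}^{-1}(t_{n})y_{n}\Bigr)=T\bigl(J((\psi_{n}^{-1}(t_{n})\mathbf{1}_{I}))\bigr);
\]
hence if the increasing net $\{J((\psi_{n}^{-1}(t_{n})\mathbf{1}_{I}))\}_{I}$ is dominated in $E$ by a single vector $v$, then $\|\bigvee_{n\in I}t_{n}f_{n}\|\le\|Tv\|$ uniformly over finite $I$. If $(\psi_{n}^{-1}(t_{n}))_{n}\in c_{0}$ this is automatic, with $v=J((\psi_{n}^{-1}(t_{n}))_{n})$.

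The hard part — the one I expect to carry the whole proof — is to choose $t_{n}$ bounded away from $0$ for which this domination holds. The convenient situation is that one can take $t_{n}$ bounded below with $\psi_{n}^{-1}(t_{n})\to 0$, keeping the preimages inside $G\cong c_{0}$; but this can genuinely fail, since the $\psi_{n}^{-1}$ may be uniformly steep at the origin (already for $E=F=\ell^{\infty}$, $G=c_{0}$). In that case one must work harder: either show on structural grounds that the relevant norm-bounded increasing net in $E$ has an upper bound, or pass to a subsequence of $(y_{n})$ and analyse the $\psi_{n}$ more delicately — invoking the subsequence principle once more, now inside $F$, to exclude that every rescaling of $(f_{n})$ has unbounded partial suprema. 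Once $\sup_{I}\|\bigvee_{n\in I}t_{n}f_{n}\|<\infty$ is in hand, Theorem~\ref{t13} follows at once from the construction in the previous paragraph.
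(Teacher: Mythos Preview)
Your setup is sound and the reduction to finding a disjoint positive sequence $(w_n)$ with $\inf_n\|w_n\|>0$ and uniformly bounded finite suprema is exactly the right target. But the proof as written is incomplete: you explicitly flag the ``hard part'' --- choosing $t_n$ bounded away from $0$ with $\sup_I\|\bigvee_{n\in I}t_nf_n\|<\infty$ when the convenient condition $(\psi_n^{-1}(t_n))\in c_0$ is unavailable --- and then offer only two vague alternatives (``structural grounds'' or an unspecified subsequence argument in $F$). Neither is carried out, and this case is not a technicality; it is the substance of the theorem. Your own example $E=F=\ell^\infty$, $G=c_0$, $T=\mathrm{id}$ already shows the convenient case can fail, so a real argument is needed.

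The paper fills this gap with a different case split that avoids the atom machinery entirely. After securing (via Lemmas~\ref{l14} and~\ref{l15}) an $N$ and an infinite $I$ with $\inf_{k\in I}\|T(Nx_k)\|>0$, it asks whether some fixed $\eta>0$ has $\inf_{k\in I}\|T(\eta x_k)\|=0$. If so, pick a subsequence $(k_i)$ with $\sum T(\eta x_{k_i})$ convergent to some $y$; then $T^{-1}y\ge \eta x_{k_i}$, so $x=(N/\eta)T^{-1}y\ge Nx_{k_i}$, and $Tx$ dominates every $T(Nx_{k_i})$ --- giving both the lower bound (from $N$) and the upper bound (from $Tx$). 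If instead $\inf_{k\in I}\|T(\eta x_k)\|>0$ for \emph{every} $\eta>0$, the lower bound is free at any scale, and the upper bound on $\|T(\varepsilon\sum_{i\le j}x_{k_i})\|$ for some small $\varepsilon$ comes from the linearity of $J$: were it to fail for every $\varepsilon$, one builds $u=\sum_m 2^{-m}\sum_{i\le j_m}e_{k_i}\in c_0$ whose image $Ju$ dominates each $2^{-m}\sum_{i\le j_m}x_{k_i}$, forcing $\|T(Ju)\|>m$ for all $m$. Note in particular the scaling trick $x=(N/\eta)T^{-1}y$ in the first case, which lets one decouple the scale at which summability occurs from the scale at which the norm lower bound holds; your atom-and-$\psi_n$ framework obscures this move. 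The atom detour is correct but unnecessary --- everything can be phrased directly in terms of $T(\lambda x_k)$.
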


Evidently Theorem \ref{t12} is stronger than Theorem \ref{t13}. But, in fact,
the two results are equivalent. Indeed, assume that Theorem \ref{t13} holds.
If $G$ is a quasi-Banach lattice order isomorphic to $c_{0}$, then, taking $E$
to be $c_{0}$ and $F$ to be $G$ in Theorem \ref{t13}, one concludes that $G$
contains a closed sublattice linearly order isomorphic to $c_{0}$. Thus any
$E$ that satisfies the hypothesis of Theorem \ref{t12} also fulfills the
condition of Theorem \ref{t13}.

We now proceed to prove Theorem \ref{t13} (and hence also Theorem \ref{t12}).
First observe that in order to produce a closed sublattice of $F$ linearly
order and topologically isomorphic to $c_{0}$, it suffices to obtain a
disjoint sequence $(y_{i})$ in $F$ such that $\inf\|y_{i}\| > 0$ and $\sup
_{j}\|\sum^{j}_{i=1}y_{i}\| < \infty$.

\begin{lem}
\label{l14} Let $G$ be a quasi-Banach lattice and let $S: c_{0} \to G$ be a
(linear) lattice isomorphism. Denote by $(e_{k})$ the unit vector basis of
$c_{0}$. Then $\inf\|Se_{k}\| > 0$.
\end{lem}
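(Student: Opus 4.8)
The plan is to argue by contradiction. Suppose $\inf_k \|Se_k\| = 0$; I will then manufacture an element of $c_0$ that dominates infinitely many of the basis vectors $e_k$, which is absurd. First I would choose a subsequence $(e_{k_i})$ with $\|Se_{k_i}\| \to 0$. Invoking the observation recorded in the paragraph setting up quasi-Banach lattices — that every norm-null sequence in a quasi-Banach space has a summable subsequence — I would, after passing to a further subsequence, assume that $y := \sum_{i=1}^\infty Se_{k_i}$ converges in $G$.

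The structural heart of the argument is that the partial sums increase. Since each $e_{k_i} \ge 0$ and $S$, being a linear lattice isomorphism, preserves order (indeed $S$ maps the positive cone into the positive cone), the partial sums $t_n := \sum_{i=1}^n Se_{k_i}$ satisfy $0 \le Se_{k_i} \le t_n$ for every $i \le n$. Fixing $i$ and letting $n \to \infty$, the positive elements $t_n - Se_{k_i}$ converge in quasi-norm to $y - Se_{k_i}$; because the positive cone of a quasi-Banach lattice is closed — again as noted in the text — this gives $y \ge Se_{k_i}$ for all $i$.

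To finish, I would use that a linear lattice isomorphism is an order isomorphism in the bijective sense: both $S$ and $S^{-1}$ preserve absolute values, hence preserve order. Applying $S^{-1}$ to $y \ge Se_{k_i}$ yields $S^{-1}y \ge e_{k_i}$ in $c_0$ for every $i$, so the $k_i$-th coordinate of $S^{-1}y$ is at least $1$ for every $i$. Since $k_i \to \infty$, this contradicts $S^{-1}y \in c_0$, and the proof is complete.

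I do not expect a genuine obstacle here. The two ingredients I rely on — extracting a summable subsequence from a norm-null sequence, and closedness of the positive cone — are precisely the quasi-Banach facts stated just above the lemma, and at no point is boundedness of $S$ or $S^{-1}$ required. The one step that calls for a little care is the passage from norm convergence of the increasing partial sums $t_n$ to the order inequality $y \ge Se_{k_i}$, which is exactly where closedness of the cone is used.
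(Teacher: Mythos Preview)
Your proposal is correct and follows essentially the same argument as the paper: assume the infimum is zero, extract a subsequence so that $\sum Se_{k_i}$ converges (using the summable-subsequence observation), use closedness of the positive cone to get the limit dominating each $Se_{k_i}$, and then apply $S^{-1}$ to obtain an element of $c_0$ with infinitely many coordinates bounded away from zero. The only cosmetic difference is that the paper phrases the domination as $x \ge \sum_{i=1}^m Se_{k_i}$ for all $m$ rather than $y \ge Se_{k_i}$ for each $i$, but this leads to the same contradiction.
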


\begin{proof}
Otherwise, by the observation preceding Theorem \ref{t12}, there is a
subsequence $(e_{k_{i}})$ such that $x = \sum Se_{k_{i}}$ converges in $G$.
Since the positive cone of $G$ is closed, $x\geq\sum^{m}_{i=1}Se_{k_{i}} =
S(\sum^{m}_{i=1}e_{k_{i}})$ for all $m$. Then $S^{-1}x \geq\sum^{m}%
_{i=1}e_{k_{i}}$ for all $m$, which is clearly absurd.
\end{proof}

\begin{lem}
\label{l15} Let $E$ and $F$ be quasi-Banach lattices and let $T:E\to F$ be an
order isomorphism such that $T0 = 0$. If $(x_{k})$ is a disjoint sequence in
$E_{+}$ with $\inf\|x_{k}\| > 0$, then there exists $N \in{\mathbb{N}}$ such
that $\limsup\|T(Nx_{k})\| > 0$.
\end{lem}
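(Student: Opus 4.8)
The plan is to prove the contrapositive. Assume that for every $N \in \mathbb{N}$ one has $\limsup_k \|T(Nx_k)\| = 0$; since $T(Nx_k) \ge T0 = 0$, this forces $\lim_k \|T(Nx_k)\| = 0$ for each fixed $N$. The idea is to run a diagonal construction that couples ``$N$ large'' with ``$k$ large'' so as to manufacture a single convergent series $\sum_i T(N_i x_{k_i})$ in $F$ with $N_i \to \infty$; pulling its sum back through the order isomorphism $T^{-1}$ will produce one element of $E$ that dominates every $N_i x_{k_i}$, and the monotonicity of the quasi-norm on $E$ will then bound $N_i\|x_{k_i}\|$, contradicting $\inf_k \|x_k\| > 0$.

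First I would record the quantitative form of the summability fact. Letting $C$ be the quasi-norm constant for $F$, the norm estimate displayed just before Theorem~\ref{t12} shows that if $(z_i)$ is any sequence in $F$ with $\|z_i\| \le (2C)^{-i}$ for all $i$, then the tails $\sup_{n\ge m}\|\sum_{i=m}^{n} z_i\|$ tend to $0$, so $\sum_i z_i$ converges in $F$. Then I would build the sequences inductively: put $N_i = i$, and using $\lim_k \|T(N_i x_k)\| = 0$ choose $k_1 < k_2 < \cdots$ with $\|T(N_i x_{k_i})\| \le (2C)^{-i}$ for every $i$ (this is where the order of quantifiers in the hypothesis is used: for each fixed $N_i$ decay in $k$ is available, so a later index $k_i$ can always be found). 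By the previous step, $y := \sum_i T(N_i x_{k_i})$ exists in $F$.

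Since each $x_{k_i}\ge 0$ and $T0 = 0$, every term $T(N_i x_{k_i})$ is positive, so for fixed $i$ the partial sums satisfy $\sum_{j=1}^{m} T(N_j x_{k_j}) \ge T(N_i x_{k_i})$ for all $m \ge i$; letting $m\to\infty$ and using that the positive cone of $F$ is closed gives $y \ge T(N_i x_{k_i})$. Applying $T^{-1}$ yields $T^{-1}y \ge N_i x_{k_i} \ge 0$ in $E$ for every $i$; in particular $T^{-1}y \ge 0$, so from $0 \le N_i x_{k_i} \le T^{-1}y$ and the lattice property of the quasi-norm we get $i\,\|x_{k_i}\| = \|N_i x_{k_i}\| \le \|T^{-1}y\|$, whence $i \le \|T^{-1}y\|/\inf_k\|x_k\|$ for all $i$, which is absurd. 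The only point needing any care is the first step --- fixing a concrete decay rate that guarantees convergence of $\sum_i z_i$ in the quasi-Banach space $F$ --- and that is exactly what the norm estimate preceding Theorem~\ref{t12} provides; everything else is routine manipulation of the order-isomorphism and lattice-norm inequalities.
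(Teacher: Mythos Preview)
Your proposal is correct and follows essentially the same argument as the paper: assume the conclusion fails, perform a diagonal selection to obtain a subsequence with $\lim_i\|T(ix_{k_i})\|=0$, pass to a further subsequence so that $y=\sum_i T(ix_{k_i})$ converges in $F$, and then use $T^{-1}y\ge ix_{k_i}$ together with $\inf_k\|x_k\|>0$ to derive a contradiction. The only cosmetic difference is that you merge the two subsequence extractions into a single inductive choice with the explicit bound $(2C)^{-i}$, whereas the paper invokes the general observation preceding Theorem~\ref{t12}; also, your remark that $T(Nx_k)\ge 0$ is not actually needed to pass from $\limsup=0$ to $\lim=0$, since norms are nonnegative anyway.
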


\begin{proof}
Otherwise, there is a subsequence $(x_{k_{i}})$ such that $\lim\|T(ix_{k_{i}%
})\|= 0$. By using a further subsequence if necessary, we may assume that $y =
\sum T(ix_{k_{i}})$ converges in $F$. Since $T$ is an order isomorphism and
$T0 = 0$, $T(ix_{k_{i}}) \geq0$ for all $i$. Thus $y \geq T(ix_{k_{i}}) \geq0$
for all $i$. Hence $T^{-1}y \geq ix_{k_{i}}\geq0$ for all $i$. Therefore,
$\|T^{-1}y\| \geq i\|x_{k_{i}}\| \geq i\inf_{k}\|x_{k}\|$ for all $i$, which
is impossible.
\end{proof}

\begin{proof}
[Proof of Theorem \ref{t13}]Let $G$ be a closed sublattice of $E$ and let
$S:c_{0}\to G$ and $T:E\to F$ be order isomorphisms, where $S$ is linear and,
without loss of generality, $T0= 0$. Denote the unit vector basis of $c_{0}$
by $(e_{k})$. By Lemma \ref{l14}, $\inf\|Se_{k}\| > 0$. Let $x_{k} = Se_{k}$.
Since $(x_{k})$ is a disjoint sequence in $E_{+}$, by Lemma \ref{l15}, there
exists $N\in{\mathbb{N}}$ and an infinite subset $I$ of ${\mathbb{N}}$ so that
$\inf_{k\in I}\|T(Nx_{k})\| > 0$.

Assume that there exists $\eta> 0$ such that $\inf_{k\in I}\|T(\eta x_{k})\| =
0$. There is an increasing sequence $(k_{i})$ in $I$ such that $y = \sum
T(\eta x_{k_{i}})$ converges in $F$. Then $y \geq T(\eta x_{k_{i}})$ and hence
$T^{-1}y \geq\eta x_{k_{i}}$ for all $i$. Thus $x = (N/\eta)T^{-1}y \geq
Nx_{k_{i}}$ and so $Tx \geq T(Nx_{k_{i}})$ for all $i$. Let $y_{i} =
T(Nx_{k_{i}})$ for all $i$. Then $(y_{i})$ is a disjoint sequence in $F$ such
that $\inf\|y_{i}\| > 0$. Furthermore, $0 \leq\sum^{j}_{i=1}y_{i} =
\bigvee^{j}_{i=1}y_{i} \leq Tx$ for all $j$. Hence $\|\sum^{j}_{i=1}y_{i}\|
\leq\|Tx\|$ for all $j$. By the remark preceding Lemma \ref{l14}, $F$ has a
closed sublattice linearly order and topologically isomorphic to $c_{0}$.

Finally, suppose that $\inf_{k\in I}\|T(\eta x_{k})\| > 0$ for all $\eta> 0$.
Let $(k_{i})$ be an increasing sequence in $I$. We claim that there exists
$\varepsilon>0$ such that $\sup_{j}\|T(\varepsilon\sum^{j}_{i=1}x_{k_{i}})\| <
\infty$. Otherwise, there is an increasing sequence $(j_{m})$ such that
$\|T(2^{-m}\sum^{j_{m}}_{i=1}x_{k_{i}})\|> m$ for all $m$. The element
\[
u = \sum^{\infty}_{m=1}2^{-m}\sum^{j_{m}}_{i=1}e_{k_{i}}%
\]
belongs to $c_{0}$ and majorizes $2^{-m}\sum^{j_{m}}_{i=1}e_{k_{i}}$ for each
$m$. Since $S$ is linear and order preserving, $x = Su \geq2^{-m}\sum^{j_{m}%
}_{i=1}x_{k_{i}}$ and thus $Tx \geq T(2^{-m}\sum^{j_{m}}_{i=1}x_{k_{i}})\geq0$
for all $m$. But then $\|Tx\| > m$ for all $m$, reaching a contradiction.
Hence the claim is verified. Let $y_{i} = T(\varepsilon x_{k_{i}})$. Then
$(y_{i})$ is a disjoint sequence and $\inf\|y_{i}\| > 0$. For any $j$,
\[
%begin{align*}
0 \leq\sum^{j}_{i=1}y_{i} = \bigvee^{j}_{i=1}y_{i} = \bigvee^{j}%
_{i=1}T(\varepsilon x_{k_{i}}) = T(\bigvee^{j}_{i=1} \varepsilon x_{k_{i}}) =
T(\sum^{j}_{i=1}\varepsilon x_{k_{i}}).
\]
%end{align*}
Therefore,
\[
\sup_{j}\|\sum^{j}_{i=1}y_{i}\| \leq\sup_{j}\|T(\sum^{j}_{i=1}\varepsilon
x_{k_{i}})\| < \infty.
\]
Again, by the remark preceding Lemma \ref{l14}, we conclude that $F$ has a
closed sublattice linearly order and topologically isomorphic to $c_{0}$.
\end{proof}

\noindent\textbf{Remark}. If $E$ is a Banach lattice, then $E$ does not
contain a closed sublattice lattice isomorphic to $c_{0}$ if and only if $E$
is weakly sequentially complete. Thus Theorem \ref{t12} shows that the
topological property of weak sequential completeness is preserved under
nonlinear order isomorphisms between Banach lattices.

\end{document}